\newtheorem{thm}{Theorem}[section]
\newtheorem{cor}[thm]{Corollary}
\newtheorem{lem}[thm]{Lemma}
\theoremstyle{definition}
\newtheorem{defn}[thm]{Definition}
\newcommand{\e}{\boldsymbol{o}}
\numberwithin{equation}{section}
\begin{document}

%%%%% To ease editing, for IMPAN journals add:

\baselineskip=17pt

%%%%%%%%%%%%%%%%

\title{Szemer\'{e}di-type theorems for subsets of locally compact abelian groups of positive upper Banach density}

\author{Xiongping Dai\\
Department of Mathematics\\
Nanjing University, Nanjing 210093\\
People's Republic of China\\
E-mail: xpdai@nju.edu.cn
\and
Hailan Liang\\
Department of Mathematics\\
Fuzhou University, Fuzhou 350003\\
People's Republic of China\\
\and
Xinjia Tang\\
Department of Mathematics\\
Nanjing University, Nanjing 210093\\
People's Republic of China}

\date{}

\maketitle
%% Classification and key words; note that the 2010 classification is used:

\renewcommand{\thefootnote}{}

\footnote{2010 \emph{Mathematics Subject Classification}: Primary 22B05; 37A15; Secondary 11B25; 37A45.}

\footnote{\emph{Key words and phrases}: Szemer\'{e}di theorem, F\"{u}rstenberg correspondence principle, amenable group.}

\renewcommand{\thefootnote}{\arabic{footnote}}
\setcounter{footnote}{0}

%%%%%%%%

\begin{abstract}
By using ergodic theoretic techniques following Hillel F\"{u}rstenberg, we prove that measurable subsets of a locally compact abelian group of positive upper density contain Szemer\'{e}di-wise configurations defined by an arbitrary compact subset of the group.
\end{abstract}

\section{Introduction}

E.~Szemer\'{e}di's theorem proved in 1975 says that if a set $E\subseteq\mathbb{Z}$ is of positive upper Banach density, i.e.,
\begin{gather*}
\mathrm{BD}^*(E):=\limsup_{N-M\to\infty}\frac{|E\cap[M,N)|}{N-M}>0,
\end{gather*}
then $E$ contains arbitrarily long arithmetic progressions (cf.~\cite{Sze, Fur}). By using their multiple recurrence theorem, H.~F\"{u}rstenberg and Y.~Katznelson in 1978 proved a multidimensional version of Szemer\'{e}di's theorem: If $E\subseteq\mathbb{Z}^n$ is of positive upper Banach density and $F$ is a finite subset of $\mathbb{Z}^n$, then for some vector $u\in\mathbb{Z}^n$ and integer $d\ge1$, $u+dF\subset E$ (cf.~\cite{FK} or \cite[Theorem~7.16]{Fur}).

Based on the above multidimensional version, moreover, Hillel F\"{u}rstenberg proved that if $E\subseteq\mathbb{R}^n$ is of positive upper Banach density with respect to the Lebesgue measure on $\mathbb{R}^n$ and $F$ is a finite subset of $\mathbb{R}^n$, then for some vector $u\in\mathbb{R}^n$ and integer $d\ge1$, $u+dF\subset E$ (cf.~\cite[Theorem~7.17]{Fur}).

In the more general case that $\mathbb{R}^n$ is replaced by an abelian discrete additive group $(G,+)$, using the Stone-\v{C}ech compactification $\beta G$ of $G$ and F\"{u}rstenberg's multiple recurrence theorem, Hindman and Strauss in 2006 proved that if $E\subset G$ is of positive upper density relative to some F{\o}lner net in $G$, then for any $a\in G$ and $l\in\mathbb{N}$, $\{u\in G\,|\,u+d\{a,2a,\dotsc,la\}\subseteq E\}$ has positive upper density relative to the same F{\o}lner net for some $d\in\mathbb{N}$ (cf.~\cite[Theorems~5.5 and 5.6]{HS06}).

In this note, we will extend the usual notion of upper Banach density in $\S\ref{sec1}$, and then by using ergodic theoretic techniques following Hille F\"{u}rstenberg, we shall derive more generalizations of Szemer\'{e}di's theorem for any finite configurations not limited to the form $\{a,2a,\dotsc,la\}$ (cf.~Theorems~\ref{thm3.1} and \ref{thm3.3} and Corollary~\ref{cor3.5} in $\S\ref{sec2}$), for any locally compact Hausdorff, not necessarily discrete, abelian group $G$ with any fixed Haar measure.
%%%%%%%%%%%%%%%%%%%%%%%%%%%%%%%%%%%%%%%%%%%%%%%%%%%%%%%%%%%%%%%%
%%%%%%%%%%%%%%%%%%%%%%%%%%%%%%%%%%%%%%%%%%%%%%%%%%%%%%%%%%%%%%%%
\section{F{\o}lner sequences and upper density}\label{sec1}%%%
In this section we will introduce some basic notions and preliminary lemmas to state and prove our main theorems in the next section.

\subsection{Basic notions}
Let $(G,+)$ be a locally compact Hausdorff additive topological group.
According to Haar's theorem (cf.~\cite[Theorem~29C]{Loo}), there exists a left-invariant Haar measure on $(G,+)$, which we denote by $|\centerdot|$ or $dg$.
When $G$ is discrete, then $|\centerdot|$ is just the usual counting measure on $G$.

A sequence of compact subsets $(F_n)_{n=1}^\infty$ in $G$ is called a classical F{\o}lner sequence in $(G,+,|\centerdot|)$ if and only if
$$
\lim_{n\to\infty}\frac{|(g+F_n)\vartriangle F_n|}{|F_n|}=0\quad \forall g\in G.
$$
It is a well-known fact that if $G$ is a locally compact $\sigma$-compact Hausdorff abelian group like $(\mathbb{Z}^m,+)$ with the discrete topology and $(\mathbb{R}^m,+)$ with the Euclidean metric topology, then it has classical F{\o}lner sequences. Although a discrete uncountable abelian group does not have any classical F{\o}lner sequences, yet it always has F{\o}lner nets (cf.~\cite{AW,HS06}).

Since an uncountable abelian group has no classical F{\o}lner sequence under the discrete topology, we need to introduce a nonclassical F{\o}lner sequence in any locally compact Hausdorff group $(G,+,|\centerdot|)$.

\begin{defn}\label{def2.1}%%%
Given any subset $F\subseteq G$, a sequence of compact subsets of $G$, $\mathcal{F}=(F_n)_{n=1}^\infty$, is called an \textit{$F$-F{\o}lner sequence} in $(G,+,|\centerdot|)$ if and only if
\begin{gather}\label{eq1.1}
\lim_{n\to\infty}\frac{|(g+F_n)\vartriangle F_n|}{|F_n|}=0\quad\forall g\in F.
\end{gather}
Notice here that (\ref{eq1.1}) holds only for $g\in F$ but not for any $g\in G$.
\end{defn}

A classical F{\o}lner sequence in $(G,+,|\centerdot|)$ is just a $G$-F{\o}lner sequence. Let us consider an example in order to show an $F$-F{\o}lner sequence in $(G,+,|\centerdot|)$ is not necessarily a classical F{\o}lner sequence for $F\not=G$.
Let $(G,+)=(\mathbb{R}^2,+)$ with $|\centerdot|=dxdy$ under the standard Euclidean topology and $\varepsilon>0$. Define a sequence of thin rectangles $F_n=\{(x,y)\colon|x-n|\le\varepsilon, |y|\le n\}$. Clearly, $|F_n|=4n\varepsilon\to\infty$ as $n\to\infty$. It is easy to see that for $F=\{0\}\times\mathbb{R}$, $(F_n)_{n=1}^\infty$ is an $F$-F{\o}lner sequence but not a classical F{\o}lner sequence in $(G,+,|\centerdot|)$.

\begin{defn}\label{def2.2}%%%
Given any $F\subseteq G$, for any $F$-F{\o}lner sequence $\mathcal{F}=(F_n)_{n=1}^\infty$ in $(G,+,|\centerdot|)$ and any $|\centerdot|$-measurable subset $E\subseteq G$, we set
\begin{gather}
\mathrm{D}_\mathcal{F}^*(E)=\limsup_{n\to\infty}\frac{|E\cap F_n|}{|F_n|},
\end{gather}
which is called the \textit{upper density of $E$ corresponding to $\mathcal{F}$} in $(G,+,|\centerdot|)$.
\end{defn}

We shall say that $E$ is of \textit{positive upper Banach density} in $(G,+,|\centerdot|)$, write $\mathrm{BD}^*(E)>0$, if $\mathrm{D}_{\mathcal{F}}^*(E)>0$ corresponding to some classical  F{\o}lner sequence $\mathcal{F}=(F_n)_{n=1}^\infty$ in $(G,+,|\centerdot|)$.
We shall be concerned with measurable sets $E\subseteq G$ having $\mathrm{BD}^*(E)>0$.
%%%%%%%%%%%%%%%%%%%%%%%%%%%%%%%%%%%%%%%%%%%%%%%%%%%%%%%%

The locally compact Hausdorff additive topological group $(G,+)$ is said to be \textit{amenable} if the so-called F{\o}lner condition holds; that is, for any compact set $K\subseteq G$ and any $\varepsilon>0$, there exists some compact set $F\subseteq G$ such that
$$\frac{|(K+F)\vartriangle F|}{|F|}<\varepsilon.$$
See, e.g., \cite{Pat}.
Each locally compact Hausdorff abelian topological group is amenable. Of course, any amenable group does not need to have a classical F{\o}lner sequence in it if no the $\sigma$-compact condition~\cite{Pat}. However, there always exists a $K$-F{\o}lner sequence in the sense of Definition~\ref{def2.1} for any compact subset $K$ of an amenable group $G$.

\subsection{Preliminary lemmas}%%%
We can then obtain the following simple result from the definition of amenable group:

\begin{lem}\label{lem2.3}%%%
If $(G,+)$ is an amenable and locally compact Hausdorff topological group, then for any compact set $K\subseteq G$ there exists a $K$-F{\o}lner sequence $(F_n)_{n=1}^\infty$ in $(G,+,|\centerdot|)$.
\end{lem}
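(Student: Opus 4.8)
The plan is to build the sequence directly out of the F{\o}lner condition that defines amenability, using a vanishing tolerance. Concretely, for each $n\in\mathbb{N}$ I would apply amenability to the given compact set $K$ with $\varepsilon=1/n$, obtaining a compact set $F_n\subseteq G$ of positive Haar measure (positivity being implicit in the F{\o}lner ratio being well defined, and finiteness of the numerator following from $K+F_n$ being compact, hence of finite Haar measure) such that
\[
\frac{|(K+F_n)\vartriangle F_n|}{|F_n|}<\frac1n.
\]
I then claim that $(F_n)_{n=1}^\infty$ is a $K$-F{\o}lner sequence in the sense of Definition~\ref{def2.1}.

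To verify this, I would fix an arbitrary $g\in K$ and reduce the one-sided recurrence estimate for $g$ to the two-sided estimate for the whole set $K$. The structural observation is the containment $g+F_n\subseteq K+F_n$, immediate from $g\in K$; intersecting with the complement of $F_n$ gives
\[
(g+F_n)\setminus F_n\subseteq(K+F_n)\setminus F_n\subseteq(K+F_n)\vartriangle F_n,
\]
so $|(g+F_n)\setminus F_n|\le|(K+F_n)\vartriangle F_n|$. Next I would invoke left-invariance of the Haar measure, which gives $|g+F_n|=|F_n|$; subtracting the common quantity $|(g+F_n)\cap F_n|$ then yields $|(g+F_n)\setminus F_n|=|F_n\setminus(g+F_n)|$, so the two halves of the symmetric difference coincide in measure and
\[
|(g+F_n)\vartriangle F_n|=2\,|(g+F_n)\setminus F_n|\le 2\,|(K+F_n)\vartriangle F_n|.
\]
Dividing by $|F_n|$ and using the defining inequality for $F_n$ produces $\dfrac{|(g+F_n)\vartriangle F_n|}{|F_n|}<\dfrac2n\to0$. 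As $g\in K$ was arbitrary, condition~(\ref{eq1.1}) holds for every $g\in K$, establishing the claim.

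The argument is essentially a direct unwinding of the definitions, so I do not expect a deep obstacle. The only point requiring genuine care is the passage from the symmetric two-sided ratio for the entire compact set $K$ to the recurrence estimate for each individual element $g\in K$: this is precisely where left-invariance of the Haar measure enters, since it is what lets me trade the a priori uncontrolled piece $|F_n\setminus(g+F_n)|$ for the controllable piece $|(g+F_n)\setminus F_n|$ and absorb the resulting factor of $2$ harmlessly into the vanishing tolerance.
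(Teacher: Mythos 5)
Your proof is correct, and it takes a more self-contained route than the paper. The paper's own proof is a one-line citation: it invokes Paterson's Theorem~4.10, which asserts that an amenable locally compact group satisfies the \emph{pointwise-uniform} F{\o}lner condition on compacta, i.e.\ for every compact $K$ and $\varepsilon>0$ there is a compact $F$ with $|(g+F)\vartriangle F|/|F|<\varepsilon$ simultaneously for all $g\in K$; taking $\varepsilon_n\to0$ then finishes the argument. You instead start from the F{\o}lner condition that the paper actually states as its \emph{definition} of amenability (the ``strong'' form involving the set sum $K+F_n$) and derive the pointwise estimate by hand: the containment $(g+F_n)\setminus F_n\subseteq(K+F_n)\vartriangle F_n$ controls one half of the symmetric difference, and left-invariance of the Haar measure ($|g+F_n|=|F_n|$, all measures finite since the sets are compact) forces the two halves to be equal, so the whole symmetric difference is bounded by $2\,|(K+F_n)\vartriangle F_n|<\tfrac{2}{n}|F_n|$, and the factor $2$ is harmless. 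This factor-of-2 reduction is exactly the content that the paper outsources to Paterson. What your version buys is that the lemma follows purely from the definition printed in the paper, with no external reference; what the paper's version buys is brevity and no need to check that the strong and pointwise F{\o}lner conditions are compatible. One small remark: your equality $|(g+F_n)\setminus F_n|=|F_n\setminus(g+F_n)|$ uses that all the sets involved have finite measure (so that subtracting $|(g+F_n)\cap F_n|$ is legitimate); you have this because the $F_n$ are compact, and it would be worth saying explicitly.
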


\begin{proof}
Since $G$ is amenable and locally compact, then by \cite[Theorem~4.10]{Pat} it follows that for any compact set $K\subseteq G$ and $\varepsilon_n>0$, there exists some compact set $F_n\subseteq G$ such that
$$\frac{|(g+F_n)\vartriangle F_n|}{|F_n|}<\varepsilon_n\quad \forall g\in K.$$
Letting $\varepsilon_n\to0$ implies the desired result.
\end{proof}

This lemma enables us to choose an $F$-F{\o}lner sequence $\mathcal{F}=(F_n)_{n=1}^\infty$ in $(G,+,|\centerdot|)$ for any compact set $F\subseteq G$ in Theorems~\ref{thm3.1} and \ref{thm3.3} below.

It is well known that any discrete countable abelian group $G$ has $G$-F{\o}lner sequences. Although this is not the case for any discrete abelian group, yet we can obtain the following

\begin{lem}\label{lem2.4}%%%
Let $(G,+)$ be a discrete abelian group. Then for any finite subset $F\subseteq G$ and any F{\o}lner net $(F_\theta)_{\theta\in\Theta}$ in $(G,+,|\centerdot|)$, there exists an $F$-F{\o}lner sequence $(F_n)_{n=1}^\infty$ in $(G,+,|\centerdot|)$ such that $(F_n)_{n=1}^\infty\subseteq(F_\theta)_{\theta\in\Theta}$.
\end{lem}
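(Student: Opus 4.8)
The plan is to exploit the finiteness of $F$ together with the directedness of the index set $\Theta$ in order to reduce the net convergence, which holds separately for each single group element, to one uniform tail estimate over all of $F$ at once. Write $F=\{g_1,\dotsc,g_k\}$. Since $(F_\theta)_{\theta\in\Theta}$ is a F{\o}lner net, for each $j\in\{1,\dotsc,k\}$ and each $n\in\mathbb{N}$ there is an index $\theta(j,n)\in\Theta$ with
$$
\frac{|(g_j+F_\theta)\vartriangle F_\theta|}{|F_\theta|}<\frac1n\qquad\text{for all }\theta\ge\theta(j,n).
$$

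Next I would merge these $k$ thresholds. For fixed $n$, directedness of $\Theta$ yields an index $\tilde\theta_n\in\Theta$ with $\tilde\theta_n\ge\theta(j,n)$ for every $j=1,\dotsc,k$; such a common upper bound exists precisely because there are only finitely many indices to dominate. Then for every $\theta\ge\tilde\theta_n$ and \emph{every} $g\in F$ one has simultaneously $\frac{|(g+F_\theta)\vartriangle F_\theta|}{|F_\theta|}<1/n$.

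To pass to a sequence I would use a routine recursion: put $\theta_1=\tilde\theta_1$, and having chosen $\theta_{n-1}$, pick $\theta_n\in\Theta$ above both $\theta_{n-1}$ and $\tilde\theta_n$ (again by directedness), so that $\theta_1\le\theta_2\le\cdots$. Define $F_n=F_{\theta_n}$; by construction each $F_n$ is one of the (compact, hence finite) sets of the net, so $(F_n)_{n=1}^\infty\subseteq(F_\theta)_{\theta\in\Theta}$. Since the indices are nondecreasing, for any fixed $m$ and all $n\ge m$ we have $\theta_n\ge\theta_m\ge\tilde\theta_m$, whence $\frac{|(g+F_n)\vartriangle F_n|}{|F_n|}<1/m$ for every $g\in F$. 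Letting $m\to\infty$ shows that (\ref{eq1.1}) holds for all $g\in F$, i.e. $(F_n)_{n=1}^\infty$ is an $F$-F{\o}lner sequence.

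The only genuinely delicate point—and the step I would want to state carefully—is that this is a legitimate passage from a net to an honest sequence: I am not forming a subnet, but merely selecting countably many values of the net along an increasing chain $\theta_1\le\theta_2\le\cdots$, which is possible exactly because the uniform tail estimate over the finite set $F$ lets each tolerance $1/n$ be attained from a single index onward. Finiteness of $F$ is essential here; for an infinite $F$ the finite-upper-bound step breaks down, and one cannot in general diagonalize a net over an uncountable directed set into a sequence.
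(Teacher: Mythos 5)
Your proposal is correct and follows essentially the same route as the paper's own proof: the paper likewise uses finiteness of $F$ together with directedness of $\Theta$ to obtain, for each $\epsilon>0$, a single index $\theta_\epsilon$ beyond which the F{\o}lner ratio is below $\epsilon$ uniformly over $F$, and then extracts the sequence by letting $\epsilon\to0$. You merely spell out the details the paper leaves implicit (the common-upper-bound step and the recursive choice of an increasing chain of indices), which is a faithful elaboration rather than a different argument.
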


\begin{proof}
Let $(F_\theta)_{\theta\in\Theta}$ be a F{\o}lner net in $(G,+,|\centerdot|)$; that is, $(\Theta,\ge)$ is a directed index set and $F_\theta$ is a finite subset of $G$ such that
$$
\lim_{\theta\in\Theta}\frac{|(g+F_\theta)\vartriangle F_\theta|}{|F_\theta|}=0\quad \forall g\in G.
$$
Then for any $\epsilon>0$, there is some $\theta_\epsilon\in\Theta$ such that
$$
\frac{|(g+F_\theta)\vartriangle F_\theta|}{|F_\theta|}<\epsilon\quad \forall g\in F\textrm{ and }\theta\ge\theta_\epsilon.
$$
Letting $\epsilon\to0$ we can find a desired $F$-F{\o}lner sequence $(F_n)_{n=1}^\infty$ in $(G,+,|\centerdot|)$.

This thus proves Lemma~\ref{lem2.4}.
\end{proof}

Given any $K\subset G$, by $\langle K\rangle$ we denote the subgroup of $(G,+)$ spanned by $K$; that is,
\begin{gather*}
\langle K\rangle=\{k_1+\dotsm+k_n\,|\,n\ge1, k_i\in K\cup(-K)\},
\end{gather*}
where $-K=\{-k\,|\,k\in K\}$. The following is a simple observation.

\begin{lem}\label{lem2.5}%%%
Let $(G,+)$ be an amenable locally compact Hausdorff topological group and $K\subseteq G$ any compact set. If $(F_n)_{n=1}^\infty$ is a $K$-F{\o}lner sequence in $(G,+,|\centerdot|)$, then it is also a $\langle K\rangle$-F{\o}lner sequence in $(G,+,|\centerdot|)$.
\end{lem}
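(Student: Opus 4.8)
The plan is to reduce the statement to two elementary closure properties of the F{\o}lner condition and then induct. For each fixed $n$ write $\delta_n(g)=|(g+F_n)\vartriangle F_n|/|F_n|$, so that the $K$-F{\o}lner hypothesis is precisely $\delta_n(k)\to0$ for every $k\in K$, while the conclusion is $\delta_n(g)\to0$ for every $g\in\langle K\rangle$. Since by definition each $g\in\langle K\rangle$ can be written as a finite sum $g=k_1+\dotsb+k_m$ with $k_i\in K\cup(-K)$, it suffices to establish (i) that the F{\o}lner condition is inherited by negatives, i.e.\ $\delta_n(k)\to0$ forces $\delta_n(-k)\to0$, and (ii) that $\delta_n$ is subadditive in its argument, i.e.\ $\delta_n(g+h)\le\delta_n(g)+\delta_n(h)$. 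Granting these, $\delta_n(k_i)\to0$ for each $i$ by (i), and then $\delta_n(g)\le\sum_{i=1}^m\delta_n(k_i)\to0$ by an $m$-fold application of (ii).

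For (i) I would use only the left-invariance of $|\centerdot|$ together with the fact that translation, being a bijection, commutes with the symmetric difference. Translating the set $(g+F_n)\vartriangle F_n$ by $-g$ gives $(-g)+\big((g+F_n)\vartriangle F_n\big)=F_n\vartriangle(-g+F_n)$, and invariance preserves its measure, so $\delta_n(-g)=\delta_n(g)$ for every $g$ and every $n$. In particular the hypothesis over $K$ automatically upgrades to the F{\o}lner condition over $K\cup(-K)$.

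For (ii) the key is the triangle inequality for symmetric differences, $A\vartriangle C\subseteq(A\vartriangle B)\cup(B\vartriangle C)$, applied with $A=g+h+F_n$, $B=h+F_n$ and $C=F_n$. The first term on the right is $(g+h+F_n)\vartriangle(h+F_n)=h+\big((g+F_n)\vartriangle F_n\big)$, whose measure equals $|(g+F_n)\vartriangle F_n|$ again by invariance, while the second is $(h+F_n)\vartriangle F_n$; dividing the resulting estimate $|(g+h+F_n)\vartriangle F_n|\le|(g+F_n)\vartriangle F_n|+|(h+F_n)\vartriangle F_n|$ by $|F_n|$ yields $\delta_n(g+h)\le\delta_n(g)+\delta_n(h)$. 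There is no genuine obstacle here: the whole lemma is a formal consequence of the invariance of $|\centerdot|$ and the triangle-type inequality for symmetric differences. The only point requiring a little care is the bookkeeping that the representation length $m$ is fixed \emph{before} letting $n\to\infty$, so that the finite sum of null sequences remains null.
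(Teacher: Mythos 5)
Your proof is correct and takes essentially the same route as the paper: left-invariance of the Haar measure, the triangle inequality $A\vartriangle C\subseteq(A\vartriangle B)\cup(B\vartriangle C)$ for sums, and the reflection identity $\delta_n(-g)=\delta_n(g)$ for inverses; indeed your writeup is tidier, since the paper only treats sums of two generators explicitly and leaves the induction on word length implicit. One small remark: your factorization $(g+h+F_n)\vartriangle(h+F_n)=h+\bigl((g+F_n)\vartriangle F_n\bigr)$ uses commutativity of $G$ (the paper instead pulls out the left-most summand, $(g_2+g_1+F_n)\vartriangle(g_2+F_n)=g_2+\bigl((g_1+F_n)\vartriangle F_n\bigr)$, which needs only left-invariance), but this is harmless since the groups here are abelian.
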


\begin{proof}
Given any $g,g_1,g_2\in K$, by (\ref{eq1.1}) with $K$ in place of $F$, we have
\begin{equation*}\begin{split}
&\lim_{n\to\infty}\frac{|(g_2+g_1+F_n)\vartriangle F_n|}{|F_n|}\\
&\le\lim_{n\to\infty}\frac{|(g_2+g_1+F_n)\setminus F_n|+|F_n\setminus(g_2+g_1+F_n)|}{|F_n|}\\
&=\lim_{n\to\infty}\frac{|(g_2+g_1+F_n)\setminus(g_2+F_n)|+|(g_2+F_n)\setminus(g_2+g_1+F_n)|}{|F_n|}\\
&=0
\end{split}\end{equation*}
and
\begin{gather*}
\lim_{n\to\infty}\frac{|(-g+F_n)\vartriangle F_n|}{|F_n|}=\lim_{n\to\infty}\frac{|F_n\vartriangle(g+F_n)|}{|F_n|}=0.
\end{gather*}
Thus $(F_n)_{n=1}^\infty$ is a $\langle K\rangle$-F{\o}lner sequence in $(G,+,|\centerdot|)$.
\end{proof}

We notice here that, as a $\sigma$-compact topological group, $(\langle K\rangle,+)$ itself does not need to be amenable, since the subgroup $\langle K\rangle$ is not necessarily a closed subset of $G$. In addition, $(F_n\cap\langle K\rangle)_{n=1}^\infty$ does not need to be a F{\o}lner sequence in $\langle K\rangle$. In fact, $F_n\cap\langle K\rangle=\emptyset$ in the example constructed before. Moreover, it is possible that $E\cap\langle K\rangle=\emptyset$ in Definition~\ref{def2.2}.

%%%%%%%%%%%%%%%%%%%%%%%%%%%%%%%%%%%%%%%%%%%%%%%%%%%%%%%%%%%%%%%%
\section{F\"{u}rstenberg correspondence principle}\label{sec2}%%%
Let $(G,+)$ be a locally compact Hausdorff additive topological group with the zero element $\e$. Given any compact Hausdorff space $X$, we shall say that
\begin{gather*}
T\colon G\times X\rightarrow X\quad \textrm{or simply write}\quad G\curvearrowright_TX
\end{gather*}
is a Borel $G$-action on $X$ or $X$ is called a Borel $G$-space if it holds that
\begin{itemize}
\item $T_g\colon x\mapsto T(g,x)$ is a continuous selfmap of $X$, for each $g\in G$;
\item $T\colon (g,x)\mapsto T(g,x)$ is jointly Borel measurable;
\item $T_{\e}x=x$ for each $x\in X$ and $T_{g+h}=T_g\circ T_h$ for all $g,h\in G$.
\end{itemize}
In this section, we will consider F\"{u}rstenberg correspondence principles between configurations in subsets of $G$ and Borel $G$-space $X$ associated to $G$.

\medskip

The following Szemer\'{e}di-type theorem (Theorem~\ref{thm3.1}) is one of our main results, in the proof of which there are two ingredients in our F\"{u}rstenberg correspondence principle: (1) the compact Hausdorff $G$-space $X$ is not necessarily metrizable; and (2) although the topology of $(G,+,|\centerdot|)$ does not need to be discrete, yet to define the associated $G$-action we will employ the discrete topology that is not necessarily compatible with the fixed Haar measure $|\centerdot|$ on $G$.

\begin{thm}\label{thm3.1}%%%
Let $(G,+)$ be a locally compact Hausdorff abelian topological group and $F\subseteq G$ a compact subset. If a measurable $E\subseteq G$ is of positive upper density corresponding to an $F$-F{\o}lner sequence, $\mathcal{F}=(F_n)_{n=1}^\infty$, in $(G,+,|\centerdot|)$, then for any $g_1^{},\dotsc,g_l^{}\in\langle F\rangle$,
$$
\mathrm{BD}_*\left(\{d\in\mathbb{Z}\,|\,\mathrm{D}_\mathcal{F}^*(\{u\in E\colon u+d\{g_1^{},\dotsc,g_l^{}\}\subseteq E\})>0\}\right)>0.
$$
Here $\mathrm{BD}_*$ denotes the lower Banach density of sets in $(\mathbb{Z},+,|\centerdot|_{\mathbb{Z}})$ and $\langle F\rangle$ stands for the subgroup of $G$ generated by $F$.
\end{thm}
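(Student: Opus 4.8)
The plan is to run a Fürstenberg correspondence principle adapted to the non-discrete, non-metrizable setting and then invoke multiple recurrence. First I would record that, since $g_1,\dots,g_l\in\langle F\rangle$ and $\langle F\rangle$ is a subgroup, every integer multiple $dg_i$ lies in $\langle F\rangle$; hence by Lemma~\ref{lem2.5} the sequence $\mathcal{F}$ is a $\langle F\rangle$-Følner sequence, so that $\lim_{n\to\infty}|(g+F_n)\vartriangle F_n|/|F_n|=0$ for every $g\in\langle F\rangle$. This is the only place the $F$-Følner hypothesis is used, and it is exactly what makes all the relevant translations ``invariant'' in the limit.

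Next I would build the dynamical model. Endow $G$ with its discrete topology, let $X=\{0,1\}^{G}$ (discrete $G$) carry the product topology, a compact Hausdorff space that is in general not metrizable, and let $G$ act by the shift $T_g\omega(h)=\omega(h+g)$, a Borel $G$-action. Put $\omega_E=\mathbf 1_E\in X$ and let $A=\{\omega\in X:\omega(\e)=1\}$, a clopen cylinder. A short computation with these conventions gives $T_u\omega_E\in A\iff u\in E$ and, writing $B_d:=A\cap\bigcap_{i=1}^l T_{g_i}^{-d}A$, the identity $\{u:T_u\omega_E\in B_d\}=\{u\in E:u+d\{g_1,\dots,g_l\}\subseteq E\}=:U_d$. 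The point to watch is measurability: although $T$ is defined through the discrete topology and need not be compatible with $|\centerdot|$, for each cylinder function $f\in C(X)$ the map $u\mapsto f(T_u\omega_E)$ is a finite combination of functions $u\mapsto\mathbf 1_E(h+u)$ and hence $|\centerdot|$-measurable, so the averages $\mu_n(f):=|F_n|^{-1}\int_{F_n}f(T_u\omega_E)\,du$ are well defined on the dense subalgebra of cylinder functions and extend to Radon probability measures $\mu_n$ on $X$.

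I would then extract the invariant measure. Passing first to a subsequence along which $|E\cap F_n|/|F_n|\to\mathrm{D}_\mathcal{F}^*(E)$ and then, using weak-$*$ compactness of the probability measures on the compact space $X$, to a weak-$*$ convergent subnet $\mu_{n_k}\to\mu$, I obtain a Radon probability measure $\mu$ with three properties. Since $\mathbf 1_A\in C(X)$ and $\mu_n(\mathbf 1_A)=|E\cap F_n|/|F_n|$, we get $\mu(A)=\mathrm{D}_\mathcal{F}^*(E)>0$. The $\langle F\rangle$-Følner property yields $|\mu_n(f\circ T_{g})-\mu_n(f)|\le\|f\|_\infty\,|(g+F_n)\vartriangle F_n|/|F_n|\to0$ for every $g\in\langle F\rangle$, so $\mu$ is invariant under every $T_g$ with $g\in\langle F\rangle$; in particular the commuting invertible maps $T_{g_1},\dots,T_{g_l}$ preserve $\mu$. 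Finally, because $\mathbf 1_{B_d}\in C(X)$ and $\mu_{n}(\mathbf 1_{B_d})=|U_d\cap F_n|/|F_n|$, taking limits along the subnet gives the correspondence inequality $\mathrm{D}_\mathcal{F}^*(U_d)\ge\mu(B_d)$ for every $d$.

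It remains to produce many $d$ with $\mu(B_d)>0$. Applying the Fürstenberg--Katznelson multiple recurrence theorem (\cite{FK}, \cite[Theorem~7.16]{Fur}) to the commuting measure-preserving transformations $T_{g_1},\dots,T_{g_l}$ and the set $A$ with $\mu(A)>0$ shows that $\mu(B_d)=\mu\bigl(A\cap\bigcap_i T_{g_i}^{-d}A\bigr)>0$ for a set of $d$ of positive density; and in its uniform (Banach) form---equivalently, using that the multiple ergodic averages over the intervals $[M,M+N)$ converge, as $N\to\infty$, to a strictly positive limit independent of $M$---one obtains $\mathrm{BD}_*\bigl(\{d\in\mathbb Z:\mu(B_d)>0\}\bigr)>0$. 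Combined with the correspondence inequality, which guarantees $\{d:\mu(B_d)>0\}\subseteq\{d:\mathrm{D}_\mathcal{F}^*(U_d)>0\}$, this gives the claimed positivity of $\mathrm{BD}_*$. I expect the main obstacle to be exactly this last upgrade: the correspondence and Fürstenberg--Katznelson alone only give positive \emph{lower} density of the good $d$ in $\mathbb N$, which is compatible with arbitrarily long gaps and hence with $\mathrm{BD}_*=0$, so the crux is to run the recurrence uniformly over all intervals of $\mathbb Z$---treating negative $d$ via the inverse maps $T_{g_i}^{-1}$---so as to control the \emph{lower Banach} density rather than the lower density from the origin. The functional-analytic setup (defining $\mu$ on a non-metrizable $X$ with a $G$-action that is measurable but not continuous in $u$) is the secondary technical point, already flagged in the remarks preceding the theorem.
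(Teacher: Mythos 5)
Your proposal is correct and follows essentially the same route as the paper's own proof: the shift system on $\{0,1\}^G$ with $G$ discretized, the weak-$*$ subnet limit $\mu$ of the F{\o}lner averages, $\langle F\rangle$-invariance via Lemma~\ref{lem2.5}, the identity $\mu([1]_{\e})=\mathrm{D}_\mathcal{F}^*(E)$, the correspondence inequality $\mu(B_d)\le\mathrm{D}_\mathcal{F}^*(U_d)$, and the F\"{u}rstenberg--Katznelson multiple recurrence theorem. The ``main obstacle'' you flag at the end is not actually an obstacle: the recurrence theorem as stated in \cite[Theorem~7.15]{Fur} is already in the uniform form $\liminf_{N-M\to\infty}\frac{1}{N-M}\sum_{n=M}^{N-1}\mu\bigl(A\cap T_1^{-n}A\cap\dotsm\cap T_l^{-n}A\bigr)>0$, which (together with your remark on inverses for negative $d$) directly yields positive \emph{lower Banach} density of $\{d\,|\,\mu(B_d)>0\}$, exactly as the paper uses it.
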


\begin{proof}
By refining the $F$-F{\o}lner sequence $\mathcal{F}=(F_n)_{n=1}^\infty$ if necessary, we may assume
\begin{equation*}
\mathrm{D}_\mathcal{F}^*(E)=\lim_{n\to\infty}\frac{|E\cap F_n|}{|F_n|}.
\end{equation*}
Let $X=\prod_{g\in G}\{0,1\}$ be the Cartesian product endowed with the standard pointwise convergence topology. Then $X$ is a compact Hausdorff space. For any $x\in X$, we may identify it with the function $x(\centerdot)\colon g\mapsto x(g)$ from $G$ into the discrete space $\{0,1\}$. Note that $x(\centerdot)\in X$ is not necessarily a measurable function from $G$ to $\{0,1\}$ under the locally compact Hausdorff topology of $G$ and that the compact Hausdorff topology of $X$ is independent of the topology of $G$.

Let $\chi\in X$ be given by $\chi(g)=1$ if and only if $g\in E$. Since by hypothesis $E$ is $|\centerdot|$-measurable, hence $\chi(\centerdot)\colon G\rightarrow\{0,1\}$ is $|\centerdot|$-measurable under the locally compact Hausdorff topology of $G$.

Define the clopen cylinder set of $X$,
$[1]_{\e}=\{x\in X\,|\,x(\e)=1\}$, which is a compact $G_\delta$-set and so is a Baire set of $X$.
Then the characteristic function $1_{[1]_{\e}}(x)$ of $[1]_{\e}$ is a continuous function on $X$, i.e., $1_{[1]_{\e}}(x)\in C(X)$.

As in the usual F\"{u}rstenberg correspondence principle, \textit{under the discrete topology of $G$} we may now define a $G$-action on $X$ as follows:
$$
T\colon G\times X\rightarrow X\quad\textrm{or write}\quad G\curvearrowright_TX;\quad (g,x)\mapsto T_gx=x(\centerdot+g)
$$
where $x(\centerdot+g)\in X$ is defined by
\begin{gather*}
x(\centerdot+g)\colon t\mapsto x(t+g).
\end{gather*}
It should be noted that the continuity of $T_g\colon X\rightarrow X$ is obvious under the product topology of $X$. Thus, under the discrete topology of $G$, $G\curvearrowright_TX$ is a canonical $G$-action; in other words, $X$ is a Borel $G$-space.

By the Riesz representation theorem we can identify Baire measures on $X$ with positive functionals on $C(X)$. Using the refined $F$-F{\o}lner sequence $\mathcal{F}=(F_n)_{n=1}^\infty$ in $(G,+,|\centerdot|)$ we shall define a Baire probability on the product space $X$. Namely
$$
\mu_n(\varphi)=\frac{1}{|F_n|}\int_{F_n}\varphi(T_g\chi)dg\quad \forall \varphi\in C(X),
$$
noting that $\varphi(T_\centerdot\chi)\colon g\mapsto\varphi(T_g\chi)$ is $|\centerdot|$-measurable and $dg$-integrable from $F_n$ to $\{0,1\}$ under the topology of $(G,+,|\centerdot|)$ because of the measurability of $E$ and that the Haar measure $|\centerdot|$ is not defined by the newly introduced discrete topology of $G$.

Under the usual weak-$*$ topology of Baire probability measures on $X$, we can find a net $(\mu_\theta)_{\theta\in\Theta}$, which is a subnet of the Baire probability measure sequence $(\mu_n)_{n=1}^\infty$, such that
$$
\textrm{weakly-$*$ }\lim_{\theta\in\Theta}\mu_\theta=\mu
$$
for some Baire probability measure $\mu$ on $X$. By Lemma~\ref{lem2.5}, it is routine to check that $\mu$ is $T_g$-invariant for each $g\in\langle F\rangle$ (not for any $g\in G$).

Since $\mathrm{D}_\mathcal{F}^*(E)>0$, hence $\mu([1]_{\e})>0$. Indeed, by $1_{[1]_{\e}}(\centerdot)\in C(X)$ and $1_{[1]_{\e}}(T_g\chi)=1_E(g)$,
\begin{align*}
\mu([1]_{\e})&=\lim_{\theta\in\Theta}\frac{1}{|F_\theta|}\int_{F_\theta}1_{[1]_{\e}}(T_g\chi)dg\\
&=\lim_{\theta\in\Theta}\frac{|E\cap F_\theta|}{|F_\theta|}\\
&=\mathrm{D}_\mathcal{F}^*(E).
\end{align*}
Let $g_1^{},\dotsc,g_l^{}\in\langle F\rangle$ be arbitrarily given. Then by F\"{u}rstenberg's multiple recurrence theorem (cf.~\cite{FK,Fur}), it follows that
$$
D=\left\{d\in\mathbb{Z}\,|\,\mu\left([1]_{\e}\cap T_{g_1^{}}^{-d}[1]_{\e}\cap\dotsm\cap T_{g_l^{}}^{-d}[1]_{\e}\right)>0\right\}
$$
is of positive lower Banach density in $(\mathbb{Z},+,|\centerdot|_\mathbb{Z})$. Set $K=\{g_1^{},\dotsc,g_l^{}\}$. Next, given any $d\in D$ we set
\begin{gather*}
A=\{u\in E\,|\, u+dK\subseteq E\}\intertext{and} U=[1]_{\e}\cap T_{g_1^{}}^{-d}[1]_{\e}\cap\dotsm\cap T_{g_l^{}}^{-d}[1]_{\e}.
\end{gather*}
Then $U$ is clopen and so is a Baire set in $X$ for $T_g$ is continuous of $X$ to itself and hence
\begin{align*}
\mu(U)&=\lim_{\theta\in\Theta}\frac{1}{|F_\theta|}\int_{F_\theta}1_U(T_g\chi)dg\\
&\le \lim_{\theta\in\Theta}\frac{|A\cap F_\theta|}{|F_\theta|}\\
&\le \limsup_{n\to\infty}\frac{|A\cap F_n|}{|F_n|}\\
&=\mathrm{D}_\mathcal{F}^*(A).
\end{align*}
This proves Theorem~\ref{thm3.1}.
\end{proof}

Let $\mathcal{B}a(X)$ be the $\sigma$-algebra of all Baire subsets of $X$. It should be noted that for $G\curvearrowright_T(X,\mathcal{B}a(X),\mu)$ associated to $E$ in the proof of Theorem~\ref{thm3.1}, $X$ is never metrizable if $G$ is uncountable. Further since in our context there is no the ergodic decomposition theorem for $(X,\mathcal{B}a(X),\mu)$ is not (isomorphic to) a Polish probability space and so no quasi-generic point, hence the proof of $\mathrm{D}_\mathcal{F}^*(\{u\in E\,|\, u+d\{g_1^{},\dotsc,g_l^{}\}\subseteq E\})>0$ is of interest.

An interesting consequence of the above theorem is the following, which is a generalization of the classical Szemer\'{e}di theorem for $G=\mathbb{Z}$ due to E.~Szemer\'{e}di~\cite{Sze}, for $G=\mathbb{Z}^m$ due to F\"{u}rstenberg and Katznelson~\cite{FK}, and for $G=\mathbb{R}^m$ with the Euclidean metric topology due to H.~F\"{u}rstenberg~\cite[Theorem~7.17]{Fur}.

\begin{cor}\label{cor3.2}%%%
Let $(G,+)$ be a second countable locally compact Hausdorff abelian topological group. If a measurable set $E\subseteq G$ is of positive upper Banach density, i.e., $\mathrm{BD}^*(E)>0$, then for any $g_1^{},\dotsc,g_l^{}\in G$
$$
\mathrm{BD}_*\left(\{d\in\mathbb{Z}\,|\,\mathrm{BD}^*(\{u\in E\colon u+d\{g_1^{},\dotsc,g_l^{}\}\subseteq E\})>0\}\right)>0.
$$
\end{cor}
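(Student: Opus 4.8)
The plan is to deduce Corollary~\ref{cor3.2} from Theorem~\ref{thm3.1}, the only substantive points being to manufacture a compact generating set containing the prescribed configuration and to translate the conclusion between the two density notions $\mathrm{D}_\mathcal{F}^*$ and $\mathrm{BD}^*$. By hypothesis $\mathrm{BD}^*(E)>0$, so by the very definition of upper Banach density there is a \emph{classical} F{\o}lner sequence $\mathcal{F}=(F_n)_{n=1}^\infty$ in $(G,+,|\centerdot|)$ with $\mathrm{D}_\mathcal{F}^*(E)>0$. (The second countability hypothesis is what guarantees that $G$ is $\sigma$-compact, hence that classical F{\o}lner sequences exist at all and the statement is not vacuous; for the derivation itself the hypothesis $\mathrm{BD}^*(E)>0$ already hands us this $\mathcal{F}$.)

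First I would fix arbitrary $g_1^{},\dotsc,g_l^{}\in G$ and set $F:=\{g_1^{},\dotsc,g_l^{}\}$. Being finite, $F$ is a compact subset of $G$; and since $\mathcal{F}$ is a classical F{\o}lner sequence, that is, a $G$-F{\o}lner sequence, condition~(\ref{eq1.1}) holds for every $g\in G$, in particular for every $g\in F$, so that $\mathcal{F}$ is an $F$-F{\o}lner sequence. As $g_1^{},\dotsc,g_l^{}\in F\subseteq\langle F\rangle$ and $\mathrm{D}_\mathcal{F}^*(E)>0$, the hypotheses of Theorem~\ref{thm3.1} are met for $F$, $\mathcal{F}$ and these group elements.

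Applying Theorem~\ref{thm3.1} then shows that, writing $A_d=\{u\in E\colon u+d\{g_1^{},\dotsc,g_l^{}\}\subseteq E\}$,
$$
D=\{d\in\mathbb{Z}\,|\,\mathrm{D}_\mathcal{F}^*(A_d)>0\}
$$
has positive lower Banach density in $(\mathbb{Z},+,|\centerdot|_\mathbb{Z})$. To finish I would upgrade the output density from $\mathrm{D}_\mathcal{F}^*$ to $\mathrm{BD}^*$: a subsequence of a classical F{\o}lner sequence is again classical, so the F{\o}lner sequence delivered by Theorem~\ref{thm3.1} is classical, and hence for each $d\in D$ the strict inequality $\mathrm{D}_\mathcal{F}^*(A_d)>0$ immediately yields $\mathrm{BD}^*(A_d)>0$ from the definition of upper Banach density. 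Consequently $D\subseteq D':=\{d\in\mathbb{Z}\,|\,\mathrm{BD}^*(A_d)>0\}$, and since lower Banach density is monotone under set inclusion we obtain $\mathrm{BD}_*(D')\ge\mathrm{BD}_*(D)>0$, which is exactly the assertion.

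The argument is essentially bookkeeping on top of Theorem~\ref{thm3.1}, so I do not expect a genuine obstacle. The one point deserving care is the interplay of the two density notions: one must use the \emph{same} classical F{\o}lner sequence both to invoke Theorem~\ref{thm3.1} (through its $F$-F{\o}lner property) and to certify $\mathrm{BD}^*(A_d)>0$ on the output side, so that no mismatch between $\mathrm{D}_\mathcal{F}^*$ and $\mathrm{BD}^*$ is introduced and the passage from the $\mathcal{F}$-specific conclusion of the theorem to the Banach-density conclusion of the corollary is legitimate.
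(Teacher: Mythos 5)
Your proof is correct and takes essentially the same route as the paper: both deduce the corollary from Theorem~\ref{thm3.1} after observing that second countability forces $\sigma$-compactness, hence the existence of a classical F{\o}lner sequence $\mathcal{F}$ witnessing $\mathrm{BD}^*(E)>0$, and then identify $\mathrm{D}_\mathcal{F}^*$-positivity with $\mathrm{BD}^*$-positivity. The only difference is your choice of $F$: the paper invokes Theorem~\ref{thm3.1} with $F=G$ (which, strictly speaking, need not be compact as that theorem's hypothesis requires), whereas you take the finite set $F=\{g_1^{},\dotsc,g_l^{}\}$, so your application of the theorem is literally within its stated hypotheses and is, if anything, the more careful of the two.
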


\begin{proof}
Since every second countable locally compact Hausdorff group is $\sigma$-compact, hence $G$ has a classical F{\o}lner sequence. Then $\mathrm{BD}^*(E)$ makes sense and the statement follows at once from Theorem~\ref{thm3.1} with $F=G$.
\end{proof}

If we now utilize Bergelson-Leibman polynomial multiple recurrence theorem (cf.~\cite{BL96}) instead of F\"{u}rstenberg's multiple recurrence theorem in the proof of Theorem~\ref{thm3.1}, then we can easily obtain the following.

\begin{thm}\label{thm3.3}%%%
Let $(G,+)$ be a locally compact Hausdorff abelian group and $F\subseteq G$ a compact subset. If a measurable set $E\subseteq G$ is of positive upper density corresponding to an $F$-F{\o}lner sequence $\mathcal{F}=(F_n)_{n=1}^\infty$ in $(G,+,|\centerdot|)$. Then for any $g_1^{},\dotsc,g_l^{}\in\langle F\rangle$
$$
\mathrm{BD}_*\left(\{d\in\mathbb{Z}\,|\,\mathrm{D}_\mathcal{F}^*(\{u\in E\colon u+\{p_1(d)g_1^{}, \dotsc, p_l(d)g_l^{}\}\subseteq E\})>0\}\right)>0
$$
for any $l$ polynomials $p_1(t), \dotsc, p_l(t)\in\mathbb{Z}[t]$ with $p_i(0)=0$ for $i=1,\dotsc,l$.
\end{thm}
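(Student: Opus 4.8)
The plan is to run the proof of Theorem~\ref{thm3.1} essentially verbatim, with the single recurrence parameter $d$ replaced by the polynomial values $p_i(d)$ and with the Bergelson--Leibman polynomial multiple recurrence theorem invoked in place of F\"urstenberg's. First I would reconstruct, with no change, the correspondence apparatus from the proof of Theorem~\ref{thm3.1}: after refining $\mathcal{F}$ so that $\mathrm{D}_\mathcal{F}^*(E)$ is a genuine limit, set $X=\prod_{g\in G}\{0,1\}$ with the product topology, let $\chi=1_E$, define the $G$-action $T_gx=x(\centerdot+g)$ under the discrete topology on $G$, form the Baire probabilities $\mu_n(\varphi)=|F_n|^{-1}\int_{F_n}\varphi(T_g\chi)\,dg$, and pass to a weak-$*$ cluster point $\mu$. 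By Lemma~\ref{lem2.5} this $\mu$ is $T_g$-invariant for every $g\in\langle F\rangle$, and $\mu([1]_{\e})=\mathrm{D}_\mathcal{F}^*(E)>0$. Since $g_1^{},\dotsc,g_l^{}\in\langle F\rangle$, the maps $S_i:=T_{g_i^{}}$ are commuting (because $G$ is abelian), invertible (as $S_i^{-1}=T_{-g_i^{}}$), and $\mu$-preserving (since $\pm g_i^{}\in\langle F\rangle$).

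Next I would apply the Bergelson--Leibman theorem to $A=[1]_{\e}$ with the commuting transformations $S_1,\dotsc,S_l$ and the polynomial array given by $p_{j,j}=-p_j$ and $p_{i,j}=0$ for $i\neq j$, supplemented by an all-zero column producing $A$ itself; every entry has vanishing constant term because $p_j(0)=0$. This yields
\[
\liminf_{N-M\to\infty}\frac{1}{N-M}\sum_{n=M}^{N-1}\mu\!\left([1]_{\e}\cap S_1^{-p_1(n)}[1]_{\e}\cap\cdots\cap S_l^{-p_l(n)}[1]_{\e}\right)>0.
\]
Since each summand lies in $[0,1]$, the set $D=\{d\in\mathbb{Z}\,|\,\mu(U_d)>0\}$, where $U_d=[1]_{\e}\cap T_{g_1^{}}^{-p_1(d)}[1]_{\e}\cap\cdots\cap T_{g_l^{}}^{-p_l(d)}[1]_{\e}$, satisfies $\mathrm{BD}_*(D)>0$. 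For each $d\in D$ the set $U_d$ is clopen, hence Baire, and the pointwise identity $1_{U_d}(T_g\chi)=1_{A_d}(g)$ with $A_d=\{u\in E\,|\,u+\{p_1(d)g_1^{},\dotsc,p_l(d)g_l^{}\}\subseteq E\}$ gives, exactly as in Theorem~\ref{thm3.1}, $\mu(U_d)\le\mathrm{D}_\mathcal{F}^*(A_d)$. Thus $D$ is contained in the target set $\{d\,|\,\mathrm{D}_\mathcal{F}^*(A_d)>0\}$, and the monotonicity of $\mathrm{BD}_*$ completes the proof.

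The verification that $A_d$ is $|\centerdot|$-measurable is routine, since $A_d=E\cap\bigcap_{i}(E-p_i(d)g_i^{})$ is a finite intersection of translates of $E$ and the Haar measure is translation invariant; likewise the sign computation $T_g\chi\in T_{g_i^{}}^{-p_i(d)}[1]_{\e}\iff g+p_i(d)g_i^{}\in E$ is the same bookkeeping already performed for Theorem~\ref{thm3.1}, now with $p_i(d)g_i^{}$ in place of $dg_i^{}$.

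The hard part is that $(X,\mathcal{B}a(X),\mu)$ need not be a standard (Polish) probability space when $G$ is uncountable, whereas the polynomial multiple recurrence theorem is usually stated for standard spaces; this is precisely the difficulty already flagged in the remark following Theorem~\ref{thm3.1}. I would resolve it by a separable-factor reduction: the correlations above involve only the countable family of translates $\{T_h[1]_{\e}\,|\,h\in\langle g_1^{},\dotsc,g_l^{}\rangle\}$, so I would restrict to the countably generated, $\langle g_1^{},\dotsc,g_l^{}\rangle$-invariant sub-$\sigma$-algebra $\mathcal{A}=\sigma\bigl(\{T_h[1]_{\e}\}\bigr)$. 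On $(X,\mathcal{A},\mu|_\mathcal{A})$ the commuting system $(S_1,\dotsc,S_l)$ is separable and its measure algebra is isomorphic to that of a standard probability space, where the Bergelson--Leibman theorem applies verbatim; none of the Boolean-combination measures used above is altered by this restriction, so the conclusion transfers back. Everything else reduces to the routine estimates already established for Theorem~\ref{thm3.1}.
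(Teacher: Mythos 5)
Your proposal is correct and follows essentially the same route as the paper: the paper offers no separate argument for Theorem~\ref{thm3.3}, stating only that one substitutes the Bergelson--Leibman polynomial multiple recurrence theorem for F\"urstenberg's in the proof of Theorem~\ref{thm3.1}, which is precisely what you carry out. Your separable-factor reduction handling the non-metrizability of $X$ (restriction to the countably generated $\langle g_1^{},\dotsc,g_l^{}\rangle$-invariant sub-$\sigma$-algebra) is a sensible extra precaution that the paper leaves implicit.
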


Recall that for any discrete abelian group $(G,+)$, $E\subseteq G$ is of positive upper Banach density if and only if there exists a F{\o}lner net $(F_\theta)_{\theta\in\Theta}$ in $(G,+,|\centerdot|)$ such that
$$
\mathrm{BD}^*(E)=\lim_{\theta\in\Theta}\frac{|E\cap F_\theta|}{|F_\theta|}>0.
$$
Then by Theorem~\ref{thm3.3} together with Lemma~\ref{lem2.4}, we can obtain the following

\begin{cor}\label{cor3.4}%%%
Let $(G,+)$ be a discrete abelian additive group and let $p_1(t), \dotsc, p_l(t)\in\mathbb{Z}[t]$ with $p_i(0)=0$ for $1\le i\le l$. If $E\subseteq G$ is such that $\mathrm{BD}^*(E)>0$ with F{\o}lner nets, then for any
$\{g_1^{},\dotsc,g_l^{}\}\subseteq G$ we have
\begin{gather*}
\mathrm{BD}_*\left(\{d\in\mathbb{Z}\,|\,\mathrm{BD}^*(\{u\in E\colon u+p_i(d)g_i^{}\in E\textrm{ for }i=1,\dotsc,l\})>0\}\right)>0.
\end{gather*}
\end{cor}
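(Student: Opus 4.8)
The plan is to derive Corollary~\ref{cor3.4} as the composition of Theorem~\ref{thm3.3} with the net-to-sequence passage of Lemma~\ref{lem2.4}, together with a transfer of the resulting density back to a F\o lner net. First I would invoke the characterization recalled just above the statement: since $\mathrm{BD}^*(E)>0$, there is a F\o lner net $(F_\theta)_{\theta\in\Theta}$ in $(G,+,|\centerdot|)$ with $\lim_\theta|E\cap F_\theta|/|F_\theta|=\mathrm{BD}^*(E)>0$. I set $F=\{g_1^{},\dots,g_l^{}\}$, a finite and hence compact subset of the discrete group $G$. By Lemma~\ref{lem2.4} I can extract from $(F_\theta)_{\theta\in\Theta}$ an $F$-F\o lner sequence $\mathcal{F}=(F_n)_{n=1}^\infty\subseteq(F_\theta)_{\theta\in\Theta}$, and since the net-density of $E$ converges I would perform this extraction cofinally and refine it so that $\mathrm{D}_\mathcal{F}^*(E)=\lim_n|E\cap F_n|/|F_n|=\mathrm{BD}^*(E)>0$. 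Thus $E$ has positive upper density corresponding to $\mathcal{F}$, which is exactly the hypothesis of Theorem~\ref{thm3.3}.

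Next I would apply Theorem~\ref{thm3.3} with this $F$ and $\mathcal{F}$. Because $g_1^{},\dots,g_l^{}\in F\subseteq\langle F\rangle$ and each $p_i(0)=0$, the theorem gives that $D=\{d\in\mathbb{Z}\mid\mathrm{D}_\mathcal{F}^*(A_d)>0\}$ has positive lower Banach density in $(\mathbb{Z},+,|\centerdot|_\mathbb{Z})$, where $A_d=\{u\in E\mid u+\{p_1(d)g_1^{},\dots,p_l(d)g_l^{}\}\subseteq E\}$. I would then record the trivial identification $A_d=\{u\in E\mid u+p_i(d)g_i^{}\in E\text{ for }i=1,\dots,l\}$, so that $D$ is precisely the index set attached to the configuration of Corollary~\ref{cor3.4}.

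The crux, and the step I expect to be the main obstacle, is the transfer from $\mathrm{D}_\mathcal{F}^*(A_d)>0$ (upper density along the $F$-F\o lner sequence) to $\mathrm{BD}^*(A_d)>0$ (positive upper Banach density in the F\o lner-net sense). Fix $d\in D$ and write $c=\mathrm{D}_\mathcal{F}^*(A_d)>0$. Since $\mathcal{F}=(F_n)$ is a subcollection of the genuine F\o lner net $(F_\theta)_{\theta\in\Theta}$ and $\limsup_n|A_d\cap F_n|/|F_n|=c$, I would argue that $c$ is a cluster value of the net $(|A_d\cap F_\theta|/|F_\theta|)_{\theta\in\Theta}$, extract a subnet of $(F_\theta)$ along which the density of $A_d$ converges to $c$, and invoke the fact that a subnet of a F\o lner net is again a F\o lner net (the defining limit for every $g\in G$ is inherited). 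This exhibits a F\o lner net witnessing $\mathrm{BD}^*(A_d)\ge c>0$, so $d$ lies in $D'=\{d\in\mathbb{Z}\mid\mathrm{BD}^*(A_d)>0\}$. The delicate point is cofinality: for $c$ to survive as a cluster value of the \emph{full} net one needs $\mathcal{F}$ to have been chosen cofinally inside $(F_\theta)_{\theta\in\Theta}$. This is automatic whenever $\Theta$ admits a cofinal sequence, in particular when $G$ is countable, and it is exactly this cofinality that I would have to secure in the extraction of Lemma~\ref{lem2.4}; it is the only place where genuine care is required.

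Finally I would combine the inclusions. The previous paragraph shows $D\subseteq D'$, and since lower Banach density in $\mathbb{Z}$ is monotone under inclusion, $\mathrm{BD}_*(D')\ge\mathrm{BD}_*(D)>0$, which is the assertion of Corollary~\ref{cor3.4}. All the essential content is concentrated in the transfer step; the remaining manipulations are the routine bookkeeping of feeding Lemma~\ref{lem2.4} into Theorem~\ref{thm3.3}.
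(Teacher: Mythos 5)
Your reduction---set $F=\{g_1^{},\dotsc,g_l^{}\}$, extract via Lemma~\ref{lem2.4} an $F$-F{\o}lner sequence $\mathcal{F}=(F_n)_{n=1}^\infty$ from the given net with $\mathrm{D}_\mathcal{F}^*(E)=\mathrm{BD}^*(E)>0$, and feed this into Theorem~\ref{thm3.3}---coincides with the paper's entire proof, and you have correctly put your finger on the step the paper leaves completely implicit: Theorem~\ref{thm3.3} yields $\mathrm{D}_\mathcal{F}^*(A_d)>0$, whereas the corollary asserts $\mathrm{BD}^*(A_d)>0$. The trouble is that your repair of this step fails precisely in the case the corollary exists for. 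Your transfer needs the extracted indices $(\theta_n)$ to be increasing and cofinal in $\Theta$, so that $c=\mathrm{D}_\mathcal{F}^*(A_d)$ survives as a cluster value of the net $\bigl(|A_d\cap F_\theta|/|F_\theta|\bigr)_{\theta\in\Theta}$. But if such an increasing cofinal sequence existed, then $(F_{\theta_n})_{n=1}^\infty$ would be a subnet of the F{\o}lner net and hence a \emph{classical} F{\o}lner sequence in $G$; as the paper itself recalls, an uncountable discrete abelian group has none (the sets $F_{\theta_n}$ generate a countable subgroup $H\subsetneq G$, and for $g\notin H$ one has $(g+F_{\theta_n})\cap F_{\theta_n}=\emptyset$, so the ratio is identically $2$). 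So for uncountable $G$---the only case where the corollary goes beyond what classical F{\o}lner sequences already give---the cofinality you say you ``would have to secure'' is provably unobtainable, and your argument collapses.

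Nor can the gap be closed by a smarter abstract transfer, because the implication ``$\mathrm{D}_\mathcal{F}^*(A)>0$ for a Lemma~\ref{lem2.4}-extracted sequence implies $\mathrm{BD}^*(A)>0$'' is false for a general set $A$. In $G=\mathbb{Z}^2$ take $F=\{(1,0)\}$, $z_n=2^{2^n}$, $F_n=\{0,1,\dotsc,n\}\times\{z_n\}$ and $A=\bigcup_nF_n$: one can build a genuine F{\o}lner net of $\mathbb{Z}^2$, indexed by pairs $(H,m)$ with $H\subseteq\mathbb{Z}^2$ finite and $m\in\mathbb{N}$, whose members are huge squares except at the indices with $H=\{(1,0)\}$, where one places the thin rectangles $F_m$; Lemma~\ref{lem2.4} applied to this net may legitimately output $\mathcal{F}=(F_n)$, and $\mathrm{D}_\mathcal{F}^*(A)=1$, yet $\mathrm{BD}^*(A)=0$, since an $N\times N$ square meets at most $O(\log\log N)$ of the rows $y=z_n$, so the density of $A$ in squares (which dominates its density along any F{\o}lner net, by the standard tiling argument) is $O(\log\log N/N)$. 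Any correct transfer must therefore exploit the structure of $A_d$, i.e.\ must go back inside the proof of Theorem~\ref{thm3.3}. The repair that actually works---and what the paper's one-line deduction should be read as---is to rerun the correspondence principle on the net itself: set $\mu_\theta(\varphi)=|F_\theta|^{-1}\sum_{g\in F_\theta}\varphi(T_g\chi)$, pass to a weak-$*$ convergent subnet, note that the limit $\mu$ is now invariant under $T_g$ for \emph{every} $g\in G$ because the net is F{\o}lner for all of $G$, apply Bergelson--Leibman, and read off $\mu(U_d)=\lim|A_d\cap F_{\theta'}|/|F_{\theta'}|$ along the convergent subnet, which is itself a F{\o}lner net; this exhibits $\mathrm{BD}^*(A_d)\ge\mu(U_d)>0$ in the net sense with no cofinality issue at all.
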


\begin{proof}
For any $\{g_1^{},\dotsc,g_l^{}\}\subseteq G$, let $F=\{g_1^{},\dotsc,g_l^{}\}$. By Lemma~\ref{lem2.4}, we can find some $F$-F{\o}lner sequence, say $\mathcal{F}=(F_n)_{n=1}^\infty$, in $(G,+,|\centerdot|)$ with $\mathrm{D}_\mathcal{F}^*(E)=\mathrm{BD}^*(E)$. Then the statement follows from Theorem~\ref{thm3.3}.
\end{proof}

We notice here that $G$ does not have any classical F{\o}lner sequence with the discrete topology when $G$ is uncountable and moreover the finite set $\{g_1^{}=a, g_2^{}=2a, \dotsc, g_l^{}=la\}$ in \cite{HS06} is a special configuration in Corollary~\ref{cor3.2}.
A special case of Theorem~\ref{thm3.3} is the following

\begin{cor}\label{cor3.5}%%%
Let $(G,+)$ be an amenable group, $F\subset G$ a compact set and $E\subseteq G$ with $\mathrm{D}_\mathcal{F}^*(E)>0$ respecting to some $F$-F{\o}lner sequence $\mathcal{F}=(F_n)_{n=1}^\infty$ in $G$. Then for any
$g\in\langle G\rangle$, $l\in\mathbb{N}$ and $p_1(t), \dotsc, p_l(t)\in\mathbb{Z}[t]$ with $p_i(0)=0$ for $i=1,\dotsc,l$, there is some $d\in\mathbb{N}$ such that
$$
\mathrm{D}_{\mathcal{F}}^*(\{u\in E\colon u+\{p_1(d)g,2p_2(d)g,\dotsc,lp_l(d)g\}\subseteq E\})>0.
$$
\end{cor}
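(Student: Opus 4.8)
The plan is to recognize Corollary~\ref{cor3.5} as an immediate specialization of Theorem~\ref{thm3.3}, the only work being to absorb the integer coefficients $1,2,\dots,l$ into the polynomials and then to extract a single admissible $d$ from the positivity of the lower Banach density. First I would collapse the $l$ directions to a single one, setting $g_1^{}=\dots=g_l^{}=g$. Then I would rescale the polynomials, putting $q_i(t):=i\,p_i(t)$ for $i=1,\dots,l$. Since each $p_i\in\mathbb{Z}[t]$ and $i\in\mathbb{N}$, we have $q_i\in\mathbb{Z}[t]$, and $q_i(0)=i\,p_i(0)=0$ because $p_i(0)=0$; thus the $q_i$ satisfy exactly the hypotheses demanded by Theorem~\ref{thm3.3}. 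The point of the rescaling is that $q_i(d)g_i^{}=i\,p_i(d)g$, so that the configuration $\{q_1(d)g_1^{},\dots,q_l(d)g_l^{}\}$ coincides with $\{p_1(d)g,2p_2(d)g,\dots,lp_l(d)g\}$.

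With this data in hand I would invoke Theorem~\ref{thm3.3}. For the application to be legitimate each $g_i^{}=g$ must belong to $\langle F\rangle$; accordingly I would read the hypothesis $g\in\langle G\rangle$ as the effective $g\in\langle F\rangle$ (note that $\langle G\rangle=G$ imposes nothing, whereas Theorem~\ref{thm3.3} genuinely requires membership in $\langle F\rangle$). Granting this, Theorem~\ref{thm3.3} applied to the $g_i^{}$ and $q_i$ above yields
$$
\mathrm{BD}_*\!\left(\{d\in\mathbb{Z}\,|\,\mathrm{D}_\mathcal{F}^*(\{u\in E\colon u+\{p_1(d)g,2p_2(d)g,\dots,lp_l(d)g\}\subseteq E\})>0\}\right)>0.
$$

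It then remains to produce an actual $d\in\mathbb{N}$. Writing $D$ for the set of $d\in\mathbb{Z}$ appearing above, positivity of $\mathrm{BD}_*(D)$ means that every sufficiently long interval $[M,N)$ meets $D$ in a proportion bounded below by a fixed $\delta>0$; in particular $D$ is nonempty and meets intervals lying arbitrarily far to the right, so $D\cap\mathbb{N}\neq\emptyset$. Selecting any $d\in D\cap\mathbb{N}$ gives the required positive integer with $\mathrm{D}_\mathcal{F}^*(\{u\in E\colon u+\{p_1(d)g,\dots,lp_l(d)g\}\subseteq E\})>0$.

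I do not anticipate a genuine difficulty here: all the analytic content is already carried by Theorem~\ref{thm3.3} (hence ultimately by the Bergelson--Leibman polynomial recurrence theorem), and the remaining steps are bookkeeping. The only points that warrant a moment's care are that the rescaling $p_i\mapsto i\,p_i$ preserves both integrality and the normalization $p_i(0)=0$, that the membership hypothesis is effectively $g\in\langle F\rangle$, and the elementary fact that a subset of $\mathbb{Z}$ of positive lower Banach density must contain a positive integer.
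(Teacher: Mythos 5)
Your proposal is correct and follows essentially the same route as the paper: both are immediate specializations of Theorem~\ref{thm3.3}, the only (cosmetic) difference being that the paper absorbs the coefficients into the group elements by taking $g_i^{}=ig$, whereas you absorb them into the polynomials via $q_i(t)=i\,p_i(t)$ with $g_i^{}=g$ --- the resulting configuration $\{p_1(d)g,2p_2(d)g,\dotsc,lp_l(d)g\}$ is identical. Your two added remarks (reading the hypothesis $g\in\langle G\rangle$ as the intended $g\in\langle F\rangle$, and extracting some $d\in\mathbb{N}$ from positive lower Banach density) are sound and, if anything, more careful than the paper's one-line justification.
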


\begin{proof}
This follows obviously from applying Theorem~\ref{thm3.3} with the special case $g_1^{}=g, g_2^{}=2g, \dotsc, g_l^{}=lg$ and noting $T_{g_1^{}},\dotsc,T_{g_l^{}}$ are commuting.
\end{proof}

Finally we note that in Theorems~\ref{thm3.1} and \ref{thm3.3}, we cannot consider $(\langle F\rangle,+)$ as an independent abelian group, because the $F$-F{\o}lner sequence $(F_n)_{n=1}^\infty$ does only belong to $G$, not to $\langle F\rangle$, and moreover, $E$ we consider here may have a void intersection with $\langle F\rangle$ and $\mathrm{D}_\mathcal{F}^*(E)$ is associated to $(F_n)_{n=1}^\infty$ in $G$.

%%%%%%%%%%%%%%%%%%%%%%%%%%%%%%%%%%%%%%%%%%%%%%
%%%%%%%%%%%%%%%%%%%%%%%%%%%%%%%%%%%%%%%%%%%%%5

\subsection*{Acknowledgements}
The authors would like to thank the referee for his/her many constructive comments.

This work was partly supported by National Natural Science Foundation of China (Grant Nos. 11431012 and 11271183) and PAPD of Jiangsu Higher Education Institutions.

\end{document}